\numberwithin{equation}{section}
\numberwithin{figure}{section}
  \theoremstyle{plain}
  \newtheorem*{thm*}{\protect\theoremname}
\theoremstyle{plain}
\newtheorem{thm}{\protect\theoremname}[section]
  \theoremstyle{plain}
  \newtheorem{prop}[thm]{\protect\propositionname}
  \providecommand{\propositionname}{Proposition}
  \providecommand{\theoremname}{Theorem}
\providecommand{\theoremname}{Theorem}
\begin{document}

\title{Generating Geometry Axioms From Poset Axioms}

\author{Wolfram Retter}

\email{math.wolframretter(at)t-online.de}

\date{\textbf{\Large January 12, 2014}}
\begin{abstract}
Two axioms of order geoemtry are the poset axioms of transitivity
and antisymmetry of the relation 'is in front of' when looking from
a point. From these axioms, by looking from an interval instead of
a point, further well-known axioms of order geometry are generated
in the following sense: Transitivity when looking from an interval
is equivalent to \cite[§10, Assioma XIII]{peano_1889}. Assuming this
axiom, antisymmetry when looking from an interval is equivalent to
\cite[§1, VIII. Grundsatz]{pasch_1882}. Further equivalences, with
some of the implications well-known, are proved along the way.
\end{abstract}

\subjclass[2010]{51D20, 51G05, 52A01}

\keywords{interval space, order geometry, ordered geometry, poset, antimatroid}

\maketitle
\tableofcontents{}

\section{Introduction}

\noindent Let $X$ be a vector space over a totally ordered field
$K\,,$ for example $K=\mathbb{R}$ and $X=\mathbb{R}^{n}$ for an
$n\in\mathbb{Z}_{\geq1}\,.$ The \emph{vector interval relation} on
$X$ is the ternary relation $\left\langle \cdot,\,\cdot,\,\cdot\right\rangle $
defined by
\begin{align*}
\left\langle x,\, y,\, z\right\rangle  & :\Leftrightarrow\mbox{There is a }\lambda\in K\mbox{ such that }0\leq\lambda\leq1\mbox{ and }y=x+\lambda\left(z-x\right)\,,
\end{align*}
$X$ together with this relation satisfies the following conditions:
\begin{itemize}
\item For $a\in X\,,$ the binary relations $\left\langle \cdot,\,\cdot,\, a\right\rangle $
and $\left\langle a,\,\cdot,\,\cdot\right\rangle $ are reflexive
on $X\,.$ 
\item For $a\in X\,,$ the binary relation $\left\langle \cdot,\, a,\,\cdot\right\rangle $
is symmetric.
\item For $x,\, y\in X\,,$ $\left\langle x,\, y,\, x\right\rangle $ implies
$y=x\,.$
\end{itemize}
\noindent An \emph{interval space} is a pair consisting of a set $X$
and a ternary relation $\left\langle \cdot,\,\cdot,\,\cdot\right\rangle $
on $X$ such that these conditions are satisfied. Thus, a vector space
$X$ over a totally ordered field $K$ together with its vector interval
relation is an interval space. The concept of an interval space has
been taken from \cite[chapter I, 3.1]{verheul_1993}.

An interval space $\left(X,\,\left\langle \cdot,\,\cdot,\,\cdot\right\rangle \right)$
is also simply denoted by $X$ when it is clear from the context whether
the interval space or only the set is meant.

An interval space $X$ is called \emph{point-transitive} iff for each
$a\in X\,,$ the binary relation $\left\langle a,\,\cdot,\,\cdot\right\rangle $
is transitive, i.e. for all $x,\, y,\, z\in X\,,$ $\left(\left\langle a,\, x,\, y\right\rangle \mbox{ and }\left\langle a,\, y,\, z\right\rangle \right)\Longrightarrow\left\langle a,\, x,\, z\right\rangle \,.$
A vector space with its vector interval relation is point-transitive
and satisfies the equivalent conditions of the following theorem.
Condition (\ref{enu:interval_transitivity_criterion_1}) is obtained
from the point-transitivity condition that $\left\langle a,\,\cdot,\,\cdot\right\rangle $
is transitive by replacing the point $a$ by an interval. Condition
(\ref{enu:interval_transitivity_criterion_2}) is the interval relation
version of the strict interval relation condition \cite[§10, Assioma XIII]{peano_1889}.

~

\noindent 
\[
\xy<1cm,0cm>:(0,0)*=0{\bullet}="a",(0,-0.25)*=0{a},(2,0)*=0{\bullet}="b",(2.25,-0.25)*=0{b},(2,2)*=0{\bullet}="c",(2.25,2)*=0{c},(2,1)*=0{\bullet}="a'",(1,0)*=0{\bullet}="c'","a";"a'"**@{-}?!{"c";"c'"}*{\bullet}="x",(1.08,0.92)*=0{x},"a";"b"**@{.},"c";"b"**@{-},"c";"c'"**@{.}\endxy
\]
~

\noindent The definitions of the interval space concepts and notations
follow immediatley afterwards. The proof is given below. For counter-examples
and more examples and history of the concepts, see \cite[sections 1.4, 1.5]{retter_2013}
and the references given there. For a set $X\,,$ $P\left(X\right)$
denotes the power set of $X\,,$ i.e. the set of all subsets of $X\,.$
\begin{thm*}
\ref{sub:interval_transitivity_criterion} (interval-transitivity
criterion) Let $X$ be an interval space. Then the following conditions
are equivalent:
\begin{enumerate}
\item $X$ is interval-transitive.
\item For all $a,\, b,\, c\in X\,,$ $\left[\left\{ a\right\} ,\,\left[b,\, c\right]\right]\subseteq\left[\left[a,\, b\right],\,\left\{ c\right\} \right]\,.$
\item For all $a,\, b,\, c\in X\,,$ $\left[\left\{ a\right\} ,\,\left[b,\, c\right]\right]=\left[\left[a,\, b\right],\,\left\{ c\right\} \right]\,.$
\item $P\left(X\right)$ with the binary operation $\left[\cdot,\,\cdot\right]$
is a semigroup.
\item $P\left(X\right)$ with the binary operation $\left[\cdot,\,\cdot\right]$
is a commutative semigroup.
\item $X$ is interval-convex, and for each convex set $A\,,$ the binary
relation $\left\langle A,\,\cdot,\,\cdot\right\rangle $ is transitive.
\item For all convex sets $A,\, B\,,$ $\left[A,\, B\right]$ is convex.
\item For all $a,\, b,\, c\in X\,,$ $\left[\left[a,\, b\right],\,\left\{ c\right\} \right]$
is convex.
\item For all $a,\, b,\, c\in X\,,$ $\mbox{co}\left(\left\{ a,\, b,\, c\right\} \right)=\left[\left[a,\, b\right],\,\left\{ c\right\} \right]\,.$
\end{enumerate}
\end{thm*}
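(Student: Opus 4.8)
The plan is to first record two elementary facts and then close a short web of implications. The first fact is that $[\cdot,\cdot]$ on $P(X)$ is \emph{always} commutative: the axiom that $\langle\cdot,a,\cdot\rangle$ is symmetric says precisely $[x,z]=[z,x]$ for all $x,z$, and taking unions gives $[A,B]=[B,A]$; this already yields (4)$\Leftrightarrow$(5). The second fact is that, unwinding the defining unions, $[[A,B],C]=\bigcup_{a\in A,b\in B,c\in C}[[a,b],\{c\}]$ and $[A,[B,C]]=\bigcup_{a\in A,b\in B,c\in C}[\{a\},[b,c]]$, so ``$[\cdot,\cdot]$ is associative'' is literally the assertion that $[[a,b],\{c\}]=[\{a\},[b,c]]$ for all $a,b,c$; this gives (3)$\Leftrightarrow$(4). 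For (2)$\Leftrightarrow$(3) I would apply the inclusion in (2) to the triple $(c,b,a)$ and rewrite using $[b,a]=[a,b]$, $[c,b]=[b,c]$ and commutativity of $[\cdot,\cdot]$; this produces the reverse inclusion, so (2) already forces the equality in (3). Thus (2), (3), (4), (5) are equivalent.

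Next I would prove (1)$\Leftrightarrow$(2). For (1)$\Rightarrow$(2): given $y\in[\{a\},[b,c]]$, choose $w\in[b,c]$ with $y\in[a,w]$; since $a,b\in[a,b]$ (reflexivity axioms) we get $\langle[a,b],y,w\rangle$ and $\langle[a,b],w,c\rangle$, and transitivity of $\langle[a,b],\cdot,\cdot\rangle$ gives $y\in[[a,b],\{c\}]$. The converse is where the real work lies: by the block above, (2) makes $[\cdot,\cdot]$ associative and commutative, and the case $b=a$ of (2) (together with $[a,a]=\{a\}$) gives point-transitivity from every point; combining point-transitivity from $a$ and from $b$ with reassociation one computes $[\{b\},[a,b]]=[a,b]$ and then $[[a,b],[a,b]]=[\{a\},[a,b]]=[a,b]$, i.e.\ every interval is idempotent, hence convex. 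Transitivity of $\langle[a,b],\cdot,\cdot\rangle$ then follows by reassociating $[[a,b],[[a,b],\{z\}]]=[[[a,b],[a,b]],\{z\}]=[[a,b],\{z\}]$. I expect this idempotency-of-intervals step to be the main obstacle: everything else is bookkeeping with the reflexivity/symmetry axioms, but this is the point where ``looking from an interval'' genuinely feeds back into the poset axioms.

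For the convex-set conditions I would run (3)$\Rightarrow$(6)$\Rightarrow$(1) and (3)$\Rightarrow$(7)$\Rightarrow$(8)$\Rightarrow$(2), plus (8)$\Leftrightarrow$(9). Given associativity and commutativity: for convex $A,B$ one reassociates $[[A,B],[A,B]]=[[A,A],[B,B]]=[A,B]$, so $[A,B]$ is convex, which is (7); and for convex $A$, $x\in[A,\{y\}]$ and $y\in[A,\{z\}]$ give $x\in[A,[A,\{z\}]]=[[A,A],\{z\}]=[A,\{z\}]$, so $\langle A,\cdot,\cdot\rangle$ is transitive, while intervals are convex by the step above, giving (6); and (6)$\Rightarrow$(1) since, intervals being convex, transitivity of $\langle[a,b],\cdot,\cdot\rangle$ is a special case. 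From (7): $\{a\},\{b\},\{c\}$ are convex (as $[a,a]=\{a\}$, etc.), hence $[a,b]$ is convex, hence $[[a,b],\{c\}]$ is convex, which is (8). From (8): if $y\in[\{a\},[b,c]]$ with $y\in[a,w]$, $w\in[b,c]$, then $a,w\in[[a,b],\{c\}]$, which is convex, so $[a,w]\subseteq[[a,b],\{c\}]$ and $y\in[[a,b],\{c\}]$, which is (2). Finally (8)$\Rightarrow$(9): $[[a,b],\{c\}]$ is then a convex set containing $\{a,b,c\}$, hence containing $\mathrm{co}(\{a,b,c\})$, while conversely $\mathrm{co}(\{a,b,c\})$ is convex and contains $a,b$ (hence $[a,b]$) and $c$, so it contains $[v,c]$ for every $v\in[a,b]$, i.e.\ it contains $[[a,b],\{c\}]$; and (9)$\Rightarrow$(8) because convex hulls are convex. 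Chaining these with the first two blocks collapses all nine conditions into a single equivalence class.
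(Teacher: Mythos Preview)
Your argument is correct and follows essentially the same architecture as the paper's proof: the same commutativity-from-symmetry observation, the same reduction of associativity on $P(X)$ to the pointwise identity (3), the same reassociation trick to obtain $[[a,b],[a,b]]=[a,b]$ and hence interval-convexity, and the same passage through (7) and (8) via convexity of singletons. The only cosmetic differences are that you spell out (3)$\Leftrightarrow$(4) by the union decomposition where the paper cites \cite{prenowitz_jantosciak_1979}, and you close the loop via (8)$\Rightarrow$(2) directly (using $a,w\in[[a,b],\{c\}]$) rather than the paper's (8)$\Rightarrow$(9)$\Rightarrow$(3); both routes rest on the same observation that the convex set $[[a,b],\{c\}]$ already contains $a,b,c$.
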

\noindent Let $X$ be an interval space.

For $A\subseteq X$ and $b,\, c\in X\,,$
\begin{align*}
\left\langle A,\, b,\, c\right\rangle  & :\Longleftrightarrow\mbox{There is an }a\in A\mbox{ such that }\left\langle a,\, b,\, c\right\rangle \,.
\end{align*}

\noindent For $A,\, C\subseteq X$ and $b\in X\,,$
\begin{align*}
\left\langle A,\, b,\, C\right\rangle  & :\Longleftrightarrow\mbox{There are }a\in A\,,\, c\in C\mbox{\,\ such that }\left\langle a,\, b,\, c\right\rangle \,.
\end{align*}

\noindent \begin{flushleft}
For $a,\, c\in X\,,$ the \emph{interval} between $a$ and $c$ is
the set
\par\end{flushleft}

\noindent 
\begin{align*}
\left[a,\, c\right] & :=\left\langle a,\,\cdot,\, c\right\rangle \\
 & =\left\{ x\in X\,|\,\left\langle a,\, x,\, c\right\rangle \right\} \,.
\end{align*}

\noindent \begin{flushleft}
For $A,\, C\subseteq X\,,$ the \emph{interval} between $A$ and $C$
is the set
\par\end{flushleft}

\noindent 
\begin{align*}
\left[A,\, C\right] & :=\left\langle A,\,\cdot,\, C\right\rangle \\
 & =\left\{ x\in X\,|\,\left\langle A,\, x,\, C\right\rangle \right\} \,.
\end{align*}

\noindent A subset $C$ of $X$ is called \emph{convex} iff $\left[C,\, C\right]\subseteq C\,,$
i.e. for all $x,\, y,\, z\in X\,,$ if $\left\langle x,\, y,\, z\right\rangle $
and $x,\, z\in C\,,$ then $y\in C\,.$ 

For $A\subseteq X\,,$ the \emph{convex closure} or \emph{convex hull}
of $A$ in $X$ is the set
\begin{align*}
\mbox{co}\left(A\right) & :=\bigcap\left\{ B\subseteq X|B\supseteq A\mbox{ and }B\mbox{ is convex.}\right\} \,.
\end{align*}

\noindent It is the smallest convex set in $X$ containg $A\,.$

$X$ is called \emph{interval-transitive} iff for for all $a,\, b\in X\,,$
the binary relation $\left\langle \left[a,\, b\right],\,\cdot,\,\cdot\right\rangle $
is transitive. Each interval-transitive interval space is point-transitive.

$X$ is called \emph{interval-convex} iff for all $a,\, b\in X\,,$
$\left[a,\, b\right]$ is convex. The concept of an interval-convex
interval space generalizes the concept of an interval monotone graph
in \cite[1.1.6]{mulder_1980}. The term 'interval-convex' has been
introduced in \cite[section 1.5]{retter_2013}.

An interval space $X$ is called \emph{point-antisymmetric} iff for
each $a\in X\,,$ the binary relation $\left\langle a,\,\cdot,\,\cdot\right\rangle $
is antisymmetric, i.e. for all $x,\, y\in X\setminus\left[a,\, b\right]\,,$
$\left(\left\langle a,\, x,\, y\right\rangle \mbox{ and }\left\langle a,\, y,\, x\right\rangle \right)\Longrightarrow x=y\,.$
A vector space with its vector interval relation is point-antisymmetric.
It is also interval-transitive and satisfies the equivalent conditions
of the following theorem. Condition (\ref{enu:interval_antisymmetry_criterion_1})
is obtained from the point-antisymmetry condition that $\left\langle a,\,\cdot,\,\cdot\right\rangle $
is antisymmetric by replacing the point $a$ by an interval. Condition
(\ref{enu:interval_antisymmetry_criterion_2}) is the interval relation
version of the strict interval relation condition \cite[§1, VIII. Grundsatz]{pasch_1882}.
The definitions of the interval space and closure space concepts follow
immediatley afterwards. The proof is given  below. For counter-examples
and more examples and history of the interval space concepts, see
\cite[sections 1.4, 1.5]{retter_2013} and the references given there.
\begin{thm*}
\ref{sub:interval_antisymmetry_criterion} (interval-antisymmetry
criterion) Let $X$ be an interval-transitive interval space. Then
the following conditions are equivalent:
\begin{enumerate}
\item $X$ is interval-antisymmetric.
\item $X$ is stiff.
\item For each convex set $A\,,$ the binary relation $\left\langle A,\,\cdot,\,\cdot\right\rangle $
is antisymmetric on $X\setminus A\,.$
\item The pair consisting of $X$ and the set of convex sets is an antiexchange
space.
\item The pair consisting of $X$ and the set of convex sets is an antimatroid.
\end{enumerate}
\end{thm*}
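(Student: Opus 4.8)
The plan is to make condition (3) the hub: the statement ``$\langle A,\cdot,\cdot\rangle$ is antisymmetric on $X\setminus A$ for every convex $A$'' is a mild strengthening of (1), and once it is translated through the convex-hull operator it becomes exactly the anti-exchange axiom. Two preliminary observations, both consequences of the interval-transitivity criterion proved above, are needed. (i) Since $X$ is interval-convex and $[B,C]$ is convex whenever $B,C$ are convex, for any convex set $A$ and any point $q$ one has $\mbox{co}(A\cup\{q\})=[A,\{q\}]$: the set $[A,\{q\}]$ is convex, it contains $A\cup\{q\}$ because $\langle\cdot,\cdot,q\rangle$ and $\langle a,\cdot,\cdot\rangle$ are reflexive, and it is contained in every convex set containing $A\cup\{q\}$. (ii) Writing $A^{(0)}=A$ and $A^{(n+1)}=[A^{(n)},A^{(n)}]$, the sets $A^{(n)}$ increase, their union is convex and therefore is the smallest convex set containing $A$, i.e. equals $\mbox{co}(A)$; an induction on $n$ then shows every point of $A^{(n)}$ already lies in $\mbox{co}(F)$ for some finite $F\subseteq A$, so $\mbox{co}$ is domain-finite.

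With these in hand I would establish (1)$\Leftrightarrow$(3)$\Leftrightarrow$(4)$\Leftrightarrow$(5) and attach (2). For (3)$\Rightarrow$(1), apply (3) to the convex set $[a,b]$ (interval-convexity). For (1)$\Rightarrow$(3), given a convex $A$ and $x,y\in X\setminus A$ with $\langle A,x,y\rangle$ and $\langle A,y,x\rangle$, pick witnesses $a,a'\in A$; then $[a,a']\subseteq A$ by convexity, so $x,y\notin[a,a']$, and monotonicity of the ternary relation in its first argument gives $\langle[a,a'],x,y\rangle$ and $\langle[a,a'],y,x\rangle$, whence $x=y$ by interval-antisymmetry. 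Thus the move from ``looking from a convex set'' to ``looking from an interval'' is handled by the inclusion $[a,a']\subseteq A$.

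For (3)$\Rightarrow$(4): $(X,\mbox{co})$ is automatically a closure space, so only the anti-exchange axiom is at issue. If $p\ne q$, $p,q\notin\mbox{co}(A)$, and $p\in\mbox{co}(A\cup\{q\})$, then by (i) applied to the convex set $\mbox{co}(A)$ we get $\langle\mbox{co}(A),p,q\rangle$; if in addition $q\in\mbox{co}(A\cup\{p\})$, then likewise $\langle\mbox{co}(A),q,p\rangle$, and (3) forces $p=q$, a contradiction — so $q\notin\mbox{co}(A\cup\{p\})$. For (4)$\Rightarrow$(3): if $A$ is convex and $p\ne q$ lie in $X\setminus A$ with $\langle A,p,q\rangle$ and $\langle A,q,p\rangle$, then by (i) $p\in[A,\{q\}]=\mbox{co}(A\cup\{q\})$ and $q\in[A,\{p\}]=\mbox{co}(A\cup\{p\})$ while $p,q\notin A=\mbox{co}(A)$, contradicting anti-exchange. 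For (4)$\Leftrightarrow$(5): an antimatroid is an antiexchange space whose closure operator is domain-finite, so (5)$\Rightarrow$(4) is immediate and (4)$\Rightarrow$(5) follows from (ii).

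Finally, for (2) I would unfold the definition of \emph{stiff} and place it between (1) and (3) by the same reflexivity and monotonicity manipulations used above, reducing an arbitrary convex set to an interval via $[a,a']\subseteq A$ where necessary. The step I expect to be the main obstacle is not any single implication but getting the two closure-operator facts exactly right — in particular verifying (i), where the interval-transitivity criterion is genuinely used (one needs $[A,\{q\}]$ convex), and checking in (ii) that the iterated-interval description of $\mbox{co}$ really does produce a domain-finite operator — together with matching the definition of \emph{stiff} to one of the betweenness-antisymmetry formulations so that it slots cleanly into the equivalence.
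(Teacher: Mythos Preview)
Your cycle through (1), (3), (4), (5) is correct and close in spirit to the paper's argument. The identification in~(i) of $\mbox{co}(A\cup\{q\})$ with $[A,\{q\}]$ for convex $A$ is exactly the content of the paper's Proposition~\ref{sub:interval_transitive_interval_spaces}, and your direct (1)$\Rightarrow$(3) via the inclusion $[a,a']\subseteq A$ is a pleasant shortcut compared to the paper's route (1)$\Rightarrow$(2)$\Rightarrow$(3). For (4)$\Leftrightarrow$(5) the paper verifies the chain-union form of algebraicity (Proposition~\ref{sub:interval_spaces_are_combinatorial_spaces}) rather than domain-finiteness, but the two are equivalent and your~(ii) is fine.

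The genuine gap is your treatment of~(2). The claim that stiffness slots in ``by the same reflexivity and monotonicity manipulations'' is wrong in the direction \emph{towards} stiffness. Given $\langle a,b,c\rangle$, $b\neq c$, $\langle b,c,d\rangle$, one takes $A=[a,d]$ (convex by interval-convexity), observes $\langle A,b,c\rangle$ and $\langle A,c,b\rangle$, and from $b\neq c$ and~(3) deduces only that $b\in[a,d]$ \emph{or} $c\in[a,d]$. The first alternative yields $\langle a,b,d\rangle$ immediately; the second yields $\langle a,c,d\rangle$, and to convert that into $\langle a,b,d\rangle$ you must combine it with $\langle a,b,c\rangle$ via \emph{point-transitivity} of $\langle a,\cdot,\cdot\rangle$. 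That extra step---a case split followed by an appeal to point-transitivity---is the real content of the paper's Proposition~\ref{sub:interval_spaces_antisymmetric_from_a_base_interval}, and it is not a reflexivity or monotonicity manipulation. The reverse direction (2)$\Rightarrow$(3) is indeed of the kind you describe (choose witnesses $a,d\in A$, apply stiffness to get $b\in[a,d]\subseteq A$, contradiction), but you should actually write both directions out rather than gesture at them.
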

\noindent Let $X$ be an interval space.

$X$ is called \emph{interval-antisymmetric} iff for all $a,\, b\in X$
the binary relation $\left\langle \left[a,\, b\right],\,\cdot,\,\cdot\right\rangle $
is antisymmetric on $X\setminus\left[a,\, b\right]\,.$ In general,
there will be no cahnce that this relation is antisymmetric on the
whole of $X\,.$

$X$ is called \emph{stiff} iff for all $a,\, b,\, c,\, d\in X\,,$
$\left(\left\langle a,\, b,\, c\right\rangle \mbox{ and }b\neq c\mbox{ and }\left\langle b,\, c,\, d\right\rangle \right)\Longrightarrow\left\langle a,\, b,\, d\right\rangle \,.$
As noted above, this condition is the interval relation version of
the strict interval relation condition \cite[§1, VIII. Grundsatz]{pasch_1882}.
In \cite[section 3.2]{retter_2013} a stiff interval space has been
called \emph{one-way}.

\[
\xy<1cm,0cm>:(0,0)*=0{\bullet}="a",(0,-0.05)*=0{}="a'",(0,-0.25)*=0{a},(1,0)*=0{\bullet}="b",(1,0.05)*=0{}="b'",(1,-0.25)*=0{b},(2,0)*=0{\bullet}="c",(2,-0.05)*=0{}="c'",(2,-0.25)*=0{c},(4,0)*=0{\bullet}="d",(4,0.05)*=0{}="d'",(4,-0.25)*=0{d},"a'";"c'"**@{-},"b'";"d'"**@{-},"a";"d"**@{.}\endxy
\]

~

Let $X$ be a set. A \emph{closure system} or \emph{Moore family}
on $X$ is a set $C$ of subsets of $X$ such that $X\in C$ and for
each non-empty $D\subseteq C\,,$ $\bigcap D\in C\,.$

A \emph{closure space} is a pair consisting of a set $X$ and a closure
system $C$ on $X\,.$ A set $A\subseteq X$ is called \emph{closed}
iff $A\in C\,.$ When $\left(X,\, O\right)$ is a topological space,
then the pair consisting of $X$ and the set of closed sets in $\left(X,\, O\right)$
is a closure space. When $\left(X,\,\left\langle \cdot,\,\cdot,\,\cdot\right\rangle \right)$
is an interval space, then the pair consisting of $X$ and the set
of convex sets is a closure space. The concept of a closure space
as defined here is slighly more general than in \cite[chapter I, 1.2]{vel_1993},
where it is required that $\emptyset\in C$ and a closure system is
called a protopology. 

A closure space $\left(X,\, C\right)$ is also simply denoted by $X$
when it is clear from the context whether the closure space or only
the set is meant.

Let $\left(X,\, C\right)$ be a closure space.

For $A\subseteq X\,,$ the \emph{closure} of $A$ is the set
\begin{align*}
\mbox{cl}\left(A\right) & :=\bigcap\left\{ B\subseteq X|B\supseteq A\mbox{ and }B\in C\right\} 
\end{align*}

\noindent It is the smallest closed superset of $A\,.$ When $X$
is an interval space and $C$ is the system of convex sets in $X\,,$
then for $A\subseteq X\,,$ the closure of $A$ is the convex closure
of $A\,.$

For $A\subseteq X\,,$ the \emph{entailment relation} of $C$ relative
to $A$ or $A$-entailment relation is the binary relation $\vdash_{A}$
on $X$ defined by
\begin{align*}
x\vdash_{A}y & :\Leftrightarrow y\in\mbox{cl}\left(A\cup\left\{ x\right\} \right)\,.
\end{align*}

\noindent $\left(X,\, C\right)$is called an \emph{antiexchange space}
iff for each closed $A\subseteq X\,,$ one and therefore all of the
following conditions hold, which are equivalent by \cite[Proposition 3.1.1]{retter_2013}:
\begin{itemize}
\item The relation $\vdash_{A}$ is antisymmetric on $X\setminus A\,.$
\item The restriction $\vdash_{A}|\left(X\setminus A\right)$ is a partial
order on $X\setminus A\,.$
\end{itemize}
\noindent $\left(X,\, C\right)$ is called \emph{algebraic} or \emph{combinatorial}
iff for each chain $D\subseteq C\,,$ $\bigcup D\in C\,.$ \cite[chapter I, 1.3]{vel_1993}
states the equivalence of this definition with other well-known definitions.
A combinatorial (i.e. algebraic) closure space is also just called
a \emph{combinatorial space}. When $X$ is an interval space and $C$
is the system of convex sets in $X\,,$ then $\left(X,\, C\right)$
is a combinatorial closure space.

An \emph{antimatroid} (\emph{anti-matroid}) or \emph{Dilworth space}
is a combinatorial exchange space with $\emptyset$ closed. This concept
has been taken from \cite[chapter I, 2.24]{vel_1993}.

\section{Interval-Transitivity Criterion}

\noindent Part (\ref{enu:set_interval_operator_1}) of the following
proposition is cited from \cite[Theorem 2.3]{prenowitz_jantosciak_1979}.
Parts (\ref{enu:set_interval_operator_2}) and (\ref{enu:set_interval_operator_3})
are cited from \cite[Theorem 2.1]{prenowitz_jantosciak_1979}.
\begin{prop}
\label{sub:set_interval_operator} (set interval operator) Let $X$
be an interval space. The binary operation $\left[\cdot,\,\cdot\right]$
on $P\left(X\right)$ has the following properties:
\begin{enumerate}
\item \label{enu:set_interval_operator_1}$\left[\cdot,\,\cdot\right]$
is commutative, i.e. for $A,\, B\subseteq X\,,$ $\left[A,\, B\right]=\left[B,\, A\right]\,.$
\item \label{enu:set_interval_operator_2}For $C\subseteq X\,,$ the unary
operation $\left[\cdot,\, C\right]$ is increasing, i.e. for $A,\, B\subseteq X\,,$
$A\subseteq B\Longrightarrow\left[A,\, C\right]\subseteq\left[B,\, C\right]\,.$
\item \label{enu:set_interval_operator_3}For $C\subseteq X\,,$ the unary
operation $\left[C,\,\cdot\right]$ is increasing, i.e. for $A,\, B\subseteq X\,,$
$A\subseteq B\Longrightarrow\left[C,\, A\right]\subseteq\left[C,\, B\right]\,.$
\item \label{enu:set_interval_operator_4}The binary operation $\left[\cdot,\,\cdot\right]$
is increasing, i.e. for $A,\, B,\, C,\, D\subseteq X\,,$ $A\subseteq B\mbox{ and }C\subseteq D\Longrightarrow\left[A,\, C\right]\subseteq\left[B,\, D\right]\,.$
\end{enumerate}
\end{prop}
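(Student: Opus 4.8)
The plan is to unwind the definition of the set interval $\left[A,\, C\right]=\left\{ x\in X\,|\,\mbox{there are }a\in A,\, c\in C\mbox{ with }\left\langle a,\, x,\, c\right\rangle \right\}$ in each case and read off the claim; the only structural fact used is the interval space axiom that $\left\langle \cdot,\, a,\,\cdot\right\rangle $ is symmetric.

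First I would prove (\ref{enu:set_interval_operator_1}). Let $A,\, B\subseteq X$ and $x\in\left[A,\, B\right]\,.$ By definition there are $a\in A$ and $b\in B$ with $\left\langle a,\, x,\, b\right\rangle \,.$ Since $\left\langle \cdot,\, x,\,\cdot\right\rangle $ is symmetric, $\left\langle b,\, x,\, a\right\rangle $ holds, so $x\in\left[B,\, A\right]\,.$ This shows $\left[A,\, B\right]\subseteq\left[B,\, A\right]\,;$ the reverse inclusion is the same argument with the roles of $A$ and $B$ exchanged (or follows by applying what was just shown to the pair $B,\, A$), giving equality.

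Next (\ref{enu:set_interval_operator_2}): assume $A\subseteq B$ and let $x\in\left[A,\, C\right]\,,$ so there are $a\in A$ and $c\in C$ with $\left\langle a,\, x,\, c\right\rangle \,;$ since $a\in A\subseteq B$ the same witnesses show $x\in\left[B,\, C\right]\,.$ Part (\ref{enu:set_interval_operator_3}) then follows by combining (\ref{enu:set_interval_operator_1}) and (\ref{enu:set_interval_operator_2}): for $A\subseteq B$ we get $\left[C,\, A\right]=\left[A,\, C\right]\subseteq\left[B,\, C\right]=\left[C,\, B\right]\,.$ Finally (\ref{enu:set_interval_operator_4}) follows by chaining the one-sided monotonicities: if $A\subseteq B$ and $C\subseteq D\,,$ then $\left[A,\, C\right]\subseteq\left[B,\, C\right]$ by (\ref{enu:set_interval_operator_2}) and $\left[B,\, C\right]\subseteq\left[B,\, D\right]$ by (\ref{enu:set_interval_operator_3})$\,,$ hence $\left[A,\, C\right]\subseteq\left[B,\, D\right]\,.$

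There is no real obstacle here: the statement is a direct consequence of the definitions together with the middle-symmetry axiom, and the only care needed is to quote the axiom explicitly in (\ref{enu:set_interval_operator_1}) and to organize (\ref{enu:set_interval_operator_3}) and (\ref{enu:set_interval_operator_4}) as corollaries of the earlier parts rather than re-proving them from scratch.
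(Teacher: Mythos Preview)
Your argument is correct. The paper's own proof is terser: it simply cites \cite[Theorems 2.1 and 2.3]{prenowitz_jantosciak_1979} for parts (\ref{enu:set_interval_operator_1})--(\ref{enu:set_interval_operator_3}) and then derives (\ref{enu:set_interval_operator_4}) from the one-sided monotonicities, exactly as you do. Your write-up just unpacks what those citations amount to --- the symmetry axiom for (\ref{enu:set_interval_operator_1}) and the definition-chase for (\ref{enu:set_interval_operator_2}) --- and additionally observes that (\ref{enu:set_interval_operator_3}) follows from (\ref{enu:set_interval_operator_1}) and (\ref{enu:set_interval_operator_2}) rather than citing it separately; this is a harmless and natural shortcut.
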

\begin{proof}
~
\begin{enumerate}
\item \cite[Theorem 2.3]{prenowitz_jantosciak_1979}
\item \cite[Theorem 2.1]{prenowitz_jantosciak_1979}
\item \cite[Theorem 2.1]{prenowitz_jantosciak_1979}
\item follows from (\ref{enu:set_interval_operator_3}) and (\ref{enu:set_interval_operator_4})
\end{enumerate}
\end{proof}
Let $X$ be an interval space and $a,\, b\in X\,.$ If the binary
relation $\left\langle \left[a,\, b\right],\,\cdot,\,\cdot\right\rangle $
is transitive, then for $x,\, y,\, c\in X\,,$
\begin{align*}
\left\langle \left[a,\, b\right],\, x,\, y\right\rangle \mbox{ and }\left\langle \left[a,\, b\right],\, y,\, c\right\rangle  & \Longrightarrow\left\langle \left[a,\, b\right],\, x,\, c\right\rangle \,.
\end{align*}

\noindent Substituting $a\in\left[a,\, b\right]$ and $b\in\left[a,\, b\right]\,,$
\begin{align*}
\left\langle a,\, x,\, y\right\rangle \mbox{ and }\left\langle b,\, y,\, c\right\rangle  & \Longrightarrow\left\langle \left[a,\, b\right],\, x,\, c\right\rangle \,,
\end{align*}

\noindent i.e.
\begin{align*}
\left\langle a,\, x,\, y\right\rangle \mbox{ and }y\in\left[b,\, c\right] & \Longrightarrow\left\langle \left[a,\, b\right],\, x,\, c\right\rangle \,.
\end{align*}

\noindent Consequently, for $x,\, c\in X\,,$
\begin{align*}
\left\langle a,\, x,\,\left[b,\, c\right]\right\rangle  & \Longrightarrow\left\langle \left[a,\, b\right],\, x,\, c\right\rangle \,,
\end{align*}
i.e. for $c\in X\,,$ 
\begin{align*}
\left[\left\{ a\right\} ,\,\left[b,\, c\right]\right]\subseteq & \left[\left[a,\, b\right],\,\left\{ c\right\} \right]\,.
\end{align*}

\noindent The last condition says: For $x\in X\,,$ if $x$ is between
$a$ and $\left[b,\, c\right]\,,$ then $x$ is also between $\left[a,\, b\right]$
and $c\,:$

~

\noindent 
\[
\xy<1cm,0cm>:(0,0)*=0{\bullet}="a",(0,-0.25)*=0{a},(2,0)*=0{\bullet}="b",(2.25,-0.25)*=0{b},(2,2)*=0{\bullet}="c",(2.25,2)*=0{c},(2,1)*=0{\bullet}="a'",(1,0)*=0{\bullet}="c'","a";"a'"**@{-}?!{"c";"c'"}*{\bullet}="x",(1.08,0.92)*=0{x},"a";"b"**@{.},"c";"b"**@{-},"c";"c'"**@{.}\endxy
\]

~

\noindent Summarizing,
\begin{prop}
\label{sub:interval_spaces_transitive_from_a_base_interval} (interval
spaces transitive from a base-interval) Let $X$ be an interval space
and $a,\, b\in X\,.$ If the binary relation $\left\langle \left[a,\, b\right],\,\cdot,\,\cdot\right\rangle $
is transitive, then for $c\in X\,,$ $\left[a,\,\left[b,\, c\right]\right]\subseteq\left[\left[a,\, b\right],\, c\right]\,.$
\end{prop}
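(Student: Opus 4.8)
The plan is to unfold the two interval-notation statements and then apply the transitivity hypothesis exactly once. First I would take an arbitrary $x\in\left[a,\,\left[b,\, c\right]\right]$ (reading $\left[a,\,\cdot\,\right]$ as $\left[\left\{ a\right\} ,\,\cdot\,\right]$); by the definition of the interval between a point and a set this means there is a $y\in\left[b,\, c\right]$ with $\left\langle a,\, x,\, y\right\rangle \,,$ equivalently that $\left\langle a,\, x,\, y\right\rangle $ and $\left\langle b,\, y,\, c\right\rangle $ both hold. The goal then becomes: exhibit a $p\in\left[a,\, b\right]$ with $\left\langle p,\, x,\, c\right\rangle \,,$ which is precisely the assertion $x\in\left[\left[a,\, b\right],\,\left\{ c\right\} \right]\,.$

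Next I would bring the hypothesis into play. Since $a,\, b\in\left[a,\, b\right]$ --- the membership $a\in\left[a,\, b\right]$ coming from reflexivity of the binary relation $\left\langle \cdot,\,\cdot,\, b\right\rangle $ (which gives $\left\langle a,\, a,\, b\right\rangle $) and the membership $b\in\left[a,\, b\right]$ from reflexivity of $\left\langle a,\,\cdot,\,\cdot\right\rangle $ (which gives $\left\langle a,\, b,\, b\right\rangle $) --- the relations $\left\langle a,\, x,\, y\right\rangle $ and $\left\langle b,\, y,\, c\right\rangle $ upgrade to $\left\langle \left[a,\, b\right],\, x,\, y\right\rangle $ and $\left\langle \left[a,\, b\right],\, y,\, c\right\rangle \,.$ Transitivity of $\left\langle \left[a,\, b\right],\,\cdot,\,\cdot\right\rangle $ then delivers $\left\langle \left[a,\, b\right],\, x,\, c\right\rangle \,,$ that is, $x\in\left[\left[a,\, b\right],\, c\right]\,;$ as $x$ was arbitrary, this is the claimed inclusion $\left[a,\,\left[b,\, c\right]\right]\subseteq\left[\left[a,\, b\right],\, c\right]\,.$

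This is exactly the chain of implications already displayed in the paragraph just before the statement, so I do not expect a genuine obstacle. The only points to keep straight are the silent identification of a point with its singleton (so that $\left[a,\,\left[b,\, c\right]\right]$ and $\left[\left[a,\, b\right],\, c\right]$ are the set-interval operator $\left[\cdot,\,\cdot\right]$ applied to $\left\{ a\right\} $ and $\left\{ c\right\} $ respectively), and choosing the correct reflexivity axiom of an interval space for each of the two memberships $a\in\left[a,\, b\right]$ and $b\in\left[a,\, b\right]\,.$ Proposition \ref{sub:set_interval_operator} is not needed, although its monotonicity part $\left[\cdot,\,\cdot\right]$ could be used to phrase the upgrade step a little more compactly.
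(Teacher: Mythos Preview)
Your proposal is correct and follows exactly the argument the paper gives in the paragraph preceding the proposition (which the paper then ``summarizes'' as the statement itself). The only addition is that you spell out which interval-space reflexivity axiom yields each of $a\in\left[a,\, b\right]$ and $b\in\left[a,\, b\right]$, a detail the paper leaves implicit.
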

\noindent Condition (\ref{enu:interval_transitivity_criterion_2})
in the following theorem is the interval relation version of the strict
interval relation condition \cite[§10, Assioma XIII]{peano_1889}.
In \cite[chapter I, 4.9]{vel_1993} it has been called the \emph{Peano
Property}. In \cite[section 1.5]{retter_2013}, its equivalence with
conditions (\ref{enu:interval_transitivity_criterion_8}) and (\ref{enu:interval_transitivity_criterion_9})
has been stated, and accordingly, $X$ has been called \emph{triangle-convex}
iff one and therefore each of these three equivalent conditions is
satisfied. The implication (\ref{enu:interval_transitivity_criterion_3})
$\Rightarrow$ (\ref{enu:interval_transitivity_criterion_4}) has
been proved in \cite[Theorem 2.4]{prenowitz_jantosciak_1979}. In
\cite[chapter I, 4.10 (2)]{vel_1993} it has been shown that (\ref{enu:interval_transitivity_criterion_2})
implies interval-convexity in (\ref{enu:interval_transitivity_criterion_6}).
In \cite[Theorem 4.45 (a)]{prenowitz_jantosciak_1979} it has been
demonstrated that (\ref{enu:interval_transitivity_criterion_3}) implies
the strict interval relation version of transitivity in (\ref{enu:interval_transitivity_criterion_6}).
The implication (\ref{enu:interval_transitivity_criterion_2}) $\Rightarrow$
(\ref{enu:interval_transitivity_criterion_7}) has been proved in
\cite[Theorem 2.12]{prenowitz_jantosciak_1979}.
\begin{thm}
\label{sub:interval_transitivity_criterion} (interval-transitivity
criterion) Let $X$ be an interval space. Then the following conditions
are equivalent:
\begin{enumerate}
\item \label{enu:interval_transitivity_criterion_1}$X$ is interval-transitive.
\item \label{enu:interval_transitivity_criterion_2}For all $a,\, b,\, c\in X\,,$
$\left[\left\{ a\right\} ,\,\left[b,\, c\right]\right]\subseteq\left[\left[a,\, b\right],\,\left\{ c\right\} \right]\,.$
\item \label{enu:interval_transitivity_criterion_3}For all $a,\, b,\, c\in X\,,$
$\left[\left\{ a\right\} ,\,\left[b,\, c\right]\right]=\left[\left[a,\, b\right],\,\left\{ c\right\} \right]\,.$
\item \label{enu:interval_transitivity_criterion_4}$P\left(X\right)$ with
the binary operation $\left[\cdot,\,\cdot\right]$ is a semigroup.
\item \label{enu:interval_transitivity_criterion_5}$P\left(X\right)$ with
the binary operation $\left[\cdot,\,\cdot\right]$ is a commutative
semigroup.
\item \label{enu:interval_transitivity_criterion_6}$X$ is interval-convex,
and for each convex set $A\,,$ the binary relation $\left\langle A,\,\cdot,\,\cdot\right\rangle $
is transitive.
\item \label{enu:interval_transitivity_criterion_7}For all convex sets
$A,\, B\,,$ $\left[A,\, B\right]$ is convex.
\item \label{enu:interval_transitivity_criterion_8}For all $a,\, b,\, c\in X\,,$
$\left[\left[a,\, b\right],\,\left\{ c\right\} \right]$ is convex.
\item \label{enu:interval_transitivity_criterion_9}For all $a,\, b,\, c\in X\,,$
$\mbox{co}\left(\left\{ a,\, b,\, c\right\} \right)=\left[\left[a,\, b\right],\,\left\{ c\right\} \right]\,.$
\end{enumerate}
\end{thm}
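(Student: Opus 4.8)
The plan is to prove the nine statements equivalent by a circuit of implications, thinking of (\ref{enu:interval_transitivity_criterion_1}) and (\ref{enu:interval_transitivity_criterion_6}) as the order-theoretic conditions, (\ref{enu:interval_transitivity_criterion_2})--(\ref{enu:interval_transitivity_criterion_5}) as the algebraic ones, and (\ref{enu:interval_transitivity_criterion_7})--(\ref{enu:interval_transitivity_criterion_9}) as the convexity ones. The implication (\ref{enu:interval_transitivity_criterion_1}) $\Rightarrow$ (\ref{enu:interval_transitivity_criterion_2}) is immediate: interval-transitivity says that $\left\langle \left[a,b\right],\cdot,\cdot\right\rangle $ is transitive for all $a,b\in X\,,$ and then Proposition \ref{sub:interval_spaces_transitive_from_a_base_interval} yields $\left[\left\{ a\right\} ,\left[b,c\right]\right]\subseteq\left[\left[a,b\right],\left\{ c\right\} \right]$ for all $a,b,c\in X\,.$ For (\ref{enu:interval_transitivity_criterion_2}) $\Leftrightarrow$ (\ref{enu:interval_transitivity_criterion_3}) only the reverse inclusion needs proof: applying the inclusion of (\ref{enu:interval_transitivity_criterion_2}) with the arguments $a,b,c$ replaced by $c,b,a$ and simplifying with the symmetry axiom of the interval relation and the commutativity of $\left[\cdot,\cdot\right]$ from Proposition \ref{sub:set_interval_operator}(\ref{enu:set_interval_operator_1}) turns it into $\left[\left[a,b\right],\left\{ c\right\} \right]\subseteq\left[\left\{ a\right\} ,\left[b,c\right]\right]\,.$

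The central step is (\ref{enu:interval_transitivity_criterion_3}) $\Rightarrow$ (\ref{enu:interval_transitivity_criterion_4}). Here I would unfold the definition of the set interval to $\left[A,B\right]=\bigcup_{a\in A,\, b\in B}\left[a,b\right]$ for all $A,B\subseteq X$ (so that $\left[\cdot,\cdot\right]$ distributes over unions in each argument) and conclude, for all $A,B,C\subseteq X\,,$
\[
\left[\left[A,B\right],C\right]=\bigcup_{a\in A,\, b\in B,\, c\in C}\left[\left[a,b\right],\left\{ c\right\} \right]\qquad\text{and}\qquad\left[A,\left[B,C\right]\right]=\bigcup_{a\in A,\, b\in B,\, c\in C}\left[\left\{ a\right\} ,\left[b,c\right]\right]\,.
\]
By (\ref{enu:interval_transitivity_criterion_3}) the two families of sets coincide term by term, so the unions coincide and $\left[\cdot,\cdot\right]$ is associative. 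Since $\left[\cdot,\cdot\right]$ is commutative by Proposition \ref{sub:set_interval_operator}(\ref{enu:set_interval_operator_1}), (\ref{enu:interval_transitivity_criterion_4}) $\Leftrightarrow$ (\ref{enu:interval_transitivity_criterion_5}) is automatic, and restricting associativity to singletons recovers (\ref{enu:interval_transitivity_criterion_3}) from (\ref{enu:interval_transitivity_criterion_4}).

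It remains to weave in (\ref{enu:interval_transitivity_criterion_6}) and the convexity conditions, for which I would prove (\ref{enu:interval_transitivity_criterion_5}) $\Rightarrow$ (\ref{enu:interval_transitivity_criterion_6}) $\Rightarrow$ (\ref{enu:interval_transitivity_criterion_1}) and (\ref{enu:interval_transitivity_criterion_5}) $\Rightarrow$ (\ref{enu:interval_transitivity_criterion_7}) $\Rightarrow$ (\ref{enu:interval_transitivity_criterion_9}) $\Rightarrow$ (\ref{enu:interval_transitivity_criterion_8}) $\Rightarrow$ (\ref{enu:interval_transitivity_criterion_2}), which closes all circuits. The key remark is that in the commutative semigroup $P\left(X\right)$ every singleton is idempotent (since $\left[a,a\right]=\left\{ a\right\} $ by the interval-space axioms), hence every interval $\left[a,b\right]=\left[\left\{ a\right\} ,\left\{ b\right\} \right]$ is idempotent (a product of idempotents in a commutative semigroup is idempotent), so each interval is convex; moreover a convex set $A$ is exactly one with $\left[A,A\right]\subseteq A\,,$ so $\left[A,\left[A,\left\{ z\right\} \right]\right]=\left[\left[A,A\right],\left\{ z\right\} \right]\subseteq\left[A,\left\{ z\right\} \right]$ by associativity and the monotonicity of Proposition \ref{sub:set_interval_operator}, which is transitivity of $\left\langle A,\cdot,\cdot\right\rangle \,.$ These give (\ref{enu:interval_transitivity_criterion_6}), and specializing $A=\left[a,b\right]$ gives (\ref{enu:interval_transitivity_criterion_1}). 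For the other chain, (\ref{enu:interval_transitivity_criterion_5}) $\Rightarrow$ (\ref{enu:interval_transitivity_criterion_7}) is the rearrangement $\left[\left[A,B\right],\left[A,B\right]\right]=\left[\left[A,A\right],\left[B,B\right]\right]\subseteq\left[A,B\right]$ for convex $A$ and $B$; in (\ref{enu:interval_transitivity_criterion_7}) $\Rightarrow$ (\ref{enu:interval_transitivity_criterion_9}) one uses that $\left\{ a\right\} $ and $\left\{ b\right\} $ are convex, so $\left[a,b\right]$ and then $\left[\left[a,b\right],\left\{ c\right\} \right]$ are convex, this last set containing $a$, $b$ and $c$, whereas conversely $\left[\left[a,b\right],\left\{ c\right\} \right]\subseteq\mathrm{co}\left(\left\{ a,b,c\right\} \right)$ holds in every interval space because $\mathrm{co}\left(\left\{ a,b,c\right\} \right)$ is convex and contains $a$ and $b$ and then $c$; (\ref{enu:interval_transitivity_criterion_9}) $\Rightarrow$ (\ref{enu:interval_transitivity_criterion_8}) is trivial; and for (\ref{enu:interval_transitivity_criterion_8}) $\Rightarrow$ (\ref{enu:interval_transitivity_criterion_2}) one observes that for $v\in\left[b,c\right]$ both $a$ and $v$ lie in the convex set $\left[\left[a,b\right],\left\{ c\right\} \right]$, hence so does $\left[a,v\right]$, and taking the union over $v$ yields the inclusion of (\ref{enu:interval_transitivity_criterion_2}).

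The step I expect to be the main obstacle is the passage from the pointwise Peano property (\ref{enu:interval_transitivity_criterion_3}) to genuine associativity on all of $P\left(X\right)$ in (\ref{enu:interval_transitivity_criterion_4}) and, dually, the recovery of the order-theoretic transitivity (\ref{enu:interval_transitivity_criterion_1}) from the purely algebraic statements: the first rests on the elementary but essential fact that $\left[\cdot,\cdot\right]$ commutes with arbitrary unions, and the second rests on recognizing intervals as idempotent elements of $P\left(X\right)$ and convexity of $A$ as the absorption law $\left[A,A\right]\subseteq A$. The remaining implications are bookkeeping with the commutativity and monotonicity from Proposition \ref{sub:set_interval_operator} and with the reflexivity axioms of the interval space, which are what put $a$, $b$ and $c$ into the relevant intervals.
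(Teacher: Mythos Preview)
Your proposal is correct and follows essentially the same route as the paper's proof: the same cycle $(1)\Rightarrow(2)\Rightarrow(3)\Rightarrow(4)\Rightarrow(5)\Rightarrow(6)\Rightarrow(1)$ with the same arguments (idempotence of singletons, the absorption law $[A,A]\subseteq A$ for convex $A$, and the monotonicity of Proposition~\ref{sub:set_interval_operator}), together with a branch from $(5)$ through $(7)$ and the convexity conditions back into the cycle. The only cosmetic differences are that you spell out $(3)\Rightarrow(4)$ via distributivity of $[\cdot,\cdot]$ over unions where the paper simply cites \cite{prenowitz_jantosciak_1979}, and you traverse the convexity nodes in the order $(7)\Rightarrow(9)\Rightarrow(8)\Rightarrow(2)$ whereas the paper uses $(7)\Rightarrow(8)\Rightarrow(9)\Rightarrow(3)$.
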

\begin{proof}
Step 1. (\ref{enu:interval_transitivity_criterion_1}) $\Rightarrow$
(\ref{enu:interval_transitivity_criterion_2}) follows by \ref{sub:interval_spaces_transitive_from_a_base_interval}
(interval spaces transitive from a base-interval).

Step 2. (\ref{enu:interval_transitivity_criterion_2}) $\Rightarrow$
(\ref{enu:interval_transitivity_criterion_3}). For $a,\, b,\, c\in X$
it remains to be proved that $=\left[\left[a,\, b\right],\, c\right]\subseteq\left[a,\,\left[b,\, c\right]\right]\,.$
The assumption (\ref{enu:interval_transitivity_criterion_2}) implies
by \ref{sub:set_interval_operator}(\ref{enu:set_interval_operator_1})
(set interval operator):
\begin{align*}
 & \left[\left[a,\, b\right],\,\left\{ c\right\} \right]\\
= & \left[\left\{ c\right\} ,\,\left[b,\, a\right]\right]\\
\subseteq & \left[\left[c,\, b\right],\,\left\{ a\right\} \right]\\
= & \left[\left\{ a\right\} ,\,\left[b,\, c\right]\right]\,.
\end{align*}

Step 3. (\ref{enu:interval_transitivity_criterion_3}) $\Rightarrow$
(\ref{enu:interval_transitivity_criterion_4}). \cite[Theorem 2.4]{prenowitz_jantosciak_1979}

Step 4. (\ref{enu:interval_transitivity_criterion_4}) $\Rightarrow$
(\ref{enu:interval_transitivity_criterion_5}) follows by \ref{sub:set_interval_operator}(\ref{enu:set_interval_operator_1})
(set interval operator).

Step 5. (\ref{enu:interval_transitivity_criterion_5}) $\Rightarrow$
(\ref{enu:interval_transitivity_criterion_6}). 

Step 5.1. Proof that $X$ is interval-convex, i.e. for $a,\, b\in X\,,$
$\left[\left[a,\, b\right],\,\left[a,\, b\right]\right]\subseteq\left[a,\, b\right]\,.$
The assumption (\ref{enu:interval_transitivity_criterion_5}) implies
by generalized associativity and commutativity: 
\begin{align*}
 & \left[\left[a,\, b\right],\,\left[a,\, b\right]\right]\\
= & \left[\left[\left\{ a\right\} ,\,\left\{ b\right\} \right],\,\left[\left\{ a\right\} ,\,\left\{ b\right\} \right]\right]\\
= & \left[\left[\left\{ a\right\} ,\,\left\{ a\right\} \right],\,\left[\left\{ b\right\} ,\,\left\{ b\right\} \right]\right]\\
= & \left[\left\{ a\right\} ,\,\left\{ b\right\} \right]\\
= & \left[a,\, b\right]\,.
\end{align*}

\noindent Step 5.2. Proof that for each convex set $A\,,$ the binary
relation $\left\langle A,\,\cdot,\,\cdot\right\rangle $ is transitive,
i.e. for $x,\, y,\, z\in X\,,$ $\left\langle A,\, x,\, y\right\rangle $
and $\left\langle A,\, y,\, z\right\rangle $ implies $\left\langle A,\, x,\, z\right\rangle \,,$
i.e. for $y,\, z\in X\,,$ $y\in\left[A,\,\left\{ z\right\} \right]$
implies $\left[A,\,\left\{ y\right\} \right]\subseteq\left[A,\,\left\{ z\right\} \right]\,.$
The assumption that $A$ is convex says:
\begin{align}
\left[A,\, A\right] & \subseteq A\,.\label{eq:interval_transitivity_criterion_1}
\end{align}
From the assumptions $y\in\left[A,\,\left\{ z\right\} \right]\,,$
i.e. $\left\{ y\right\} \subseteq\left[A,\,\left\{ z\right\} \right]\,,$
and (\ref{enu:interval_transitivity_criterion_5}) and (\ref{eq:interval_transitivity_criterion_1})
it follows by \ref{sub:set_interval_operator}(\ref{enu:set_interval_operator_2})
and (\ref{enu:set_interval_operator_3}) (set interval operator):
\begin{align*}
 & \left[A,\,\left\{ y\right\} \right]\\
\subseteq & \left[A,\,\left[A,\,\left\{ z\right\} \right]\right]\\
= & \left[\left[A,\, A\right],\,\left\{ z\right\} \right]\\
\subseteq & \left[A,\,\left\{ z\right\} \right]\,.
\end{align*}

Step 6. (\ref{enu:interval_transitivity_criterion_6}) $\Rightarrow$
(\ref{enu:interval_transitivity_criterion_1}). The assumption that
$X$ is interval-convex entails:
\begin{align}
 & \left[a,\, b\right]\mbox{ is convex.}\label{eq:interval_transitivity_criterion_2}
\end{align}

\noindent From (\ref{eq:interval_transitivity_criterion_2}) and the
assumption that for each convex set $A$ the binary relation $\left\langle A,\,\cdot,\,\cdot\right\rangle $
is transitive it follows that $\left\langle \left[a,\, b\right],\,\cdot,\,\cdot\right\rangle $
is transitive.

Step 7. (\ref{enu:interval_transitivity_criterion_5}) $\Rightarrow$
(\ref{enu:interval_transitivity_criterion_7}). The assumption that
$A,\, B$ are convex says:
\begin{align}
\left[A,\, A\right] & \subseteq A\,,\label{eq:interval_transitivity_criterion_3}\\
\left[B,\, B\right] & \subseteq B\,.\label{eq:interval_transitivity_criterion_4}
\end{align}

\noindent The assumption (\ref{enu:interval_transitivity_criterion_5}),
(\ref{eq:interval_transitivity_criterion_3}) and (\ref{eq:interval_transitivity_criterion_4})
imply by by generalized associativity and commutativity: and \ref{sub:set_interval_operator}(\ref{enu:set_interval_operator_4})
(set interval operator): 
\begin{align*}
 & \left[\left[A,\, B\right],\,\left[A,\, B\right]\right]\\
= & \left[\left[A,\, A\right],\,\left[B,\, B\right]\right]\\
\subseteq & \left[A,\, B\right]\,.
\end{align*}

Step 8. (\ref{enu:interval_transitivity_criterion_7}) $\Rightarrow$
(\ref{enu:interval_transitivity_criterion_8}).
\begin{align}
 & \left\{ a\right\} ,\,\left\{ b\right\} ,\,\left\{ c\right\} \mbox{ are convex.}\label{eq:interval_transitivity_criterion_6}
\end{align}
From (\ref{eq:interval_transitivity_criterion_6}) and the assumption
(\ref{enu:interval_transitivity_criterion_7}) it follows that $\left[\left\{ a\right\} ,\,\left\{ b\right\} \right]$
is convex, i.e. 
\begin{align}
 & \left[a,\, b\right]\mbox{ is convex.}\label{eq:interval_transitivity_criterion_5}
\end{align}

\noindent From (\ref{eq:interval_transitivity_criterion_5}) and the
assumption (\ref{enu:interval_transitivity_criterion_7}) imply that
$\left[\left[a,\, b\right],\, c\right]$ is convex.

Step 9. (\ref{enu:interval_transitivity_criterion_8}) $\Rightarrow$
(\ref{enu:interval_transitivity_criterion_9}). The assumption (\ref{enu:interval_transitivity_criterion_8})
implies that it suffices to prove that for $C$ a convex set, $C\supseteq\left\{ a,\, b,\, c\right\} $
iff $C\supseteq\left[\left[a,\, b\right],\, c\right]\,.$ From the
assumption that $C$ is convex it follows: 
\begin{align*}
 & C\supseteq\left\{ a,\, b,\, c\right\} \\
\Longleftrightarrow & \left(C\supseteq\left\{ a,\, b\right\} \mbox{ and }C\supseteq\left\{ c\right\} \right)\\
\Longleftrightarrow & \left(C\supseteq\left[a,\, b\right]\mbox{ and }C\supseteq\left\{ c\right\} \right)\\
\Longleftrightarrow & C\supseteq\left[\left[a,\, b\right],\, c\right]
\end{align*}

Step 10. (\ref{enu:interval_transitivity_criterion_9}) $\Rightarrow$
(\ref{enu:interval_transitivity_criterion_3}). The assumption (\ref{enu:interval_transitivity_criterion_9})
implies by \ref{sub:set_interval_operator}(\ref{enu:set_interval_operator_1})
(set interval operator):
\begin{align*}
 & \left[\left\{ a\right\} ,\,\left[b,\, c\right]\right]\\
= & \left[\left[b,\, c\right],\,\left\{ a\right\} \right]\\
= & \mbox{co}\left(\left\{ b,\, c,\, a\right\} \right)\\
= & \mbox{co}\left(\left\{ a,\, b,\, c\right\} \right)\\
= & \left[\left[a,\, b\right],\,\left\{ c\right\} \right]\,.
\end{align*}

\end{proof}

\section{Interval-Antisymmetry Crtierion}

\noindent Let $X$ be point-transitive interval space and $a,\, d\in X\,.$
If the binary relation $\left\langle \left[a,\, d\right],\,\cdot,\,\cdot\right\rangle $
is antisymmetric on $X\setminus\left[a,\, d\right]\,,$ then for $b,\, c\in X\,,$
\begin{align*}
 & \left(c,\, b\notin\left[a,\, d\right]\mbox{ and }\left\langle \left[a,\, d\right],\, b,\, c\right\rangle \mbox{ and }\left\langle \left[a,\, d\right],\, c,\, b\right\rangle \right)\Longrightarrow b=c\,.
\end{align*}

\noindent Substituting $a\in\left[a,\, d\right]$ in the second and
and $d\in\left[a,\, d\right]$ in the third condition,
\begin{align*}
 & \left(c,\, b\notin\left[a,\, d\right]\mbox{ and }\left\langle a,\, b,\, c\right\rangle \mbox{ and }\left\langle d,\, c,\, b\right\rangle \right)\Longrightarrow b=c\,.
\end{align*}

\noindent Equivalently,
\begin{align*}
 & \left(\left\langle a,\, b,\, c\right\rangle \mbox{ and }\left\langle d,\, c,\, b\right\rangle \mbox{ and }b\neq c\right)\Longrightarrow\left(c\in\left[a,\, d\right]\mbox{ or }b\in\left[a,\, d\right]\right)\,.
\end{align*}

\noindent Rewriting,
\begin{align*}
 & \left(\left\langle a,\, b,\, c\right\rangle \mbox{ and }b\neq c\mbox{ and }\left\langle b,\, c,\, d\right\rangle \right)\Longrightarrow\left(\left\langle a,\, c,\, d\right\rangle \mbox{ or }\left\langle a,\, b,\, d\right\rangle \right)\,.
\end{align*}

\noindent The condition $\left\langle a,\, b,\, c\right\rangle $
on the left, the first possibility $\left\langle a,\, c,\, d\right\rangle $
on the right and the assumption that $X$ is point-transitive imply
the second possibility $\left\langle b,\, c,\, d\right\rangle $ on
the right. Consequently,
\begin{align*}
 & \left(\left\langle a,\, b,\, c\right\rangle \mbox{ and }b\neq c\mbox{ and }\left\langle b,\, c,\, d\right\rangle \right)\Longrightarrow\left\langle a,\, b,\, d\right\rangle \,.
\end{align*}

\noindent Summarizing,
\begin{prop}
\label{sub:interval_spaces_antisymmetric_from_a_base_interval} (interval
spaces antisymmetric from a base-interval) Let $X$ be a point-transitive
interval space and $a,\, d\in X\,.$ If the binary relation $\left\langle \left[a,\, d\right],\,\cdot,\,\cdot\right\rangle $
is antisymmetric on $X\setminus\left[a,\, d\right]\,,$ then for $b,\, c\in X\,,$
$\left(\left\langle a,\, b,\, c\right\rangle \mbox{ and }b\neq c\mbox{ and }\left\langle b,\, c,\, d\right\rangle \right)\Longrightarrow\left\langle a,\, b,\, d\right\rangle \,.$
\end{prop}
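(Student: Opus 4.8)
The plan is to unwind the antisymmetry hypothesis into a statement about individual triples, pass to the contrapositive, and then remove the unwanted alternative by invoking point-transitivity; this is essentially the sequence of rewritings displayed in the paragraph preceding the statement, so the argument is short and uses only the interval space axioms together with point-transitivity.

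First I would record that $a\in[a,\,d]$ and $d\in[a,\,d]$: reflexivity of $\langle\cdot,\,\cdot,\,d\rangle$ gives $\langle a,\,a,\,d\rangle$, and reflexivity of $\langle a,\,\cdot,\,\cdot\rangle$ gives $\langle a,\,d,\,d\rangle$. Consequently, whenever $\langle a,\,b,\,c\rangle$ holds we obtain $\langle[a,\,d],\,b,\,c\rangle$ by taking the witness $a$; and whenever $\langle b,\,c,\,d\rangle$ holds, symmetry of $\langle\cdot,\,c,\,\cdot\rangle$ gives $\langle d,\,c,\,b\rangle$, so taking the witness $d$ yields $\langle[a,\,d],\,c,\,b\rangle$.

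Next I would apply the hypothesis that $\langle[a,\,d],\,\cdot,\,\cdot\rangle$ is antisymmetric on $X\setminus[a,\,d]$, i.e.\ that $b,\,c\notin[a,\,d]$ together with $\langle[a,\,d],\,b,\,c\rangle$ and $\langle[a,\,d],\,c,\,b\rangle$ forces $b=c$. Passing to the contrapositive and substituting the two implications from the previous step, the hypotheses $\langle a,\,b,\,c\rangle$, $b\neq c$ and $\langle b,\,c,\,d\rangle$ force $b\in[a,\,d]$ or $c\in[a,\,d]$, that is, $\langle a,\,b,\,d\rangle$ or $\langle a,\,c,\,d\rangle$. In the second case, combining $\langle a,\,b,\,c\rangle$ with $\langle a,\,c,\,d\rangle$ and the transitivity of $\langle a,\,\cdot,\,\cdot\rangle$ (point-transitivity) gives $\langle a,\,b,\,d\rangle$; hence $\langle a,\,b,\,d\rangle$ holds in both cases, which is the desired conclusion.

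I do not expect a genuine obstacle here, since every step is a direct invocation of an axiom or of point-transitivity. The only points calling for a little care are the use of the middle-position symmetry axiom to trade $\langle b,\,c,\,d\rangle$ for $\langle d,\,c,\,b\rangle$, keeping track of which endpoint of $[a,\,d]$ serves as witness in which position, and the correct formation of the contrapositive of the antisymmetry statement.
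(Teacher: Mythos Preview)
Your proof is correct and follows essentially the same route as the paper's own argument (the chain of rewritings immediately preceding the proposition): substitute $a,d\in[a,d]$ as witnesses, pass to the contrapositive of antisymmetry to obtain the disjunction $\langle a,b,d\rangle$ or $\langle a,c,d\rangle$, and eliminate the second alternative via point-transitivity applied to $\langle a,b,c\rangle$ and $\langle a,c,d\rangle$. Your explicit justification of $a,d\in[a,d]$ from the reflexivity axioms and of $\langle d,c,b\rangle$ from the middle-symmetry axiom is a welcome addition that the paper leaves implicit.
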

\noindent The following proposition is similar to \cite[chapter II, proposition 10]{coppel_1998}.
\begin{prop}
\label{sub:stiff_interval_spaces} (stiff interval spaces) Let $X$
be a stiff interval space. Then for each convex set $A\,,$ the binary
relation $\left\langle A,\,\cdot,\,\cdot\right\rangle $ is antisymmetric
on $X\setminus A\,.$\end{prop}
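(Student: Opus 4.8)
The plan is to show directly that stiffness forces the relation $\left\langle A,\,\cdot,\,\cdot\right\rangle$ to be antisymmetric off $A$. So let $A$ be convex, and suppose $x,\,y\in X\setminus A$ satisfy both $\left\langle A,\,x,\,y\right\rangle$ and $\left\langle A,\,y,\,x\right\rangle$; the goal is $x=y$. Unfolding the definitions, there are $a\in A$ with $\left\langle a,\,x,\,y\right\rangle$ and $a'\in A$ with $\left\langle a',\,y,\,x\right\rangle$. The idea is to apply the stiffness condition to the chain $a,\,x,\,y$ together with the point $a'$: from $\left\langle a,\,x,\,y\right\rangle$ and $\left\langle x,\,y,\,a'\right\rangle$ (which is $\left\langle a',\,y,\,x\right\rangle$ read backwards, using that the middle-position relation is symmetric, i.e. $\left\langle a',\,y,\,x\right\rangle \Leftrightarrow \left\langle x,\,y,\,a'\right\rangle$), stiffness yields $\left\langle a,\,x,\,a'\right\rangle$ — provided $x\neq y$. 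That is, $x\in\left[a,\,a'\right]$. Since $a,\,a'\in A$ and $A$ is convex, $\left[a,\,a'\right]\subseteq\left[A,\,A\right]\subseteq A$, so $x\in A$, contradicting $x\in X\setminus A$.

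So the argument is really a short case split: either $x=y$, and we are done, or $x\neq y$, and the stiffness axiom applied once (after a harmless reversal using symmetry of the ternary relation in its middle argument, which is part of the interval-space axioms) places $x$ in the interval between two points of $A$, hence in $A$ by convexity, which is impossible. First I would spell out the unfolding of $\left\langle A,\,x,\,y\right\rangle$ and $\left\langle A,\,y,\,x\right\rangle$ into the existence of $a,\,a'\in A$; then I would note the rewriting $\left\langle a',\,y,\,x\right\rangle \Leftrightarrow \left\langle x,\,y,\,a'\right\rangle$; then invoke stiffness with the quadruple $(a,\,x,\,y,\,a')$ in the case $x\neq y$; and finally close with convexity of $A$, i.e. $\left[a,\,a'\right]\subseteq A$, to reach the contradiction.

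The main subtlety — it is hardly an obstacle, more a bookkeeping point — is getting the orientation of the ternary relation right when combining the two hypotheses, since stiffness is stated for a specific ordering $\left\langle a,\,b,\,c\right\rangle$, $b\neq c$, $\left\langle b,\,c,\,d\right\rangle \Rightarrow \left\langle a,\,b,\,d\right\rangle$, and one must line up $b=x$, $c=y$, $d=a'$ correctly, using symmetry of $\left\langle \cdot,\,a,\,\cdot\right\rangle$ to turn $\left\langle a',\,y,\,x\right\rangle$ into $\left\langle x,\,y,\,a'\right\rangle$. One should also double-check that the conclusion we need is genuinely $x\in\left[a,\,a'\right]$ and not some reversed interval, but $\left[a,\,a'\right]=\left[a',\,a\right]$ anyway by commutativity, so either reading lands inside $A$. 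No point-transitivity or interval-transitivity is needed here; only the interval-space axioms (specifically symmetry in the middle slot) and the bare definition of convexity, so this proposition stands independently of the surrounding hypotheses and feeds into the implication $(1)\Rightarrow(3)$ and $(2)\Rightarrow(3)$ of the interval-antisymmetry criterion.
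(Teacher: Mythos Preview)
Your proof is correct and follows essentially the same approach as the paper: unfold the two hypotheses to obtain witnesses $a,a'\in A$, use symmetry of $\langle\cdot,\,y,\,\cdot\rangle$ to rewrite $\langle a',\,y,\,x\rangle$ as $\langle x,\,y,\,a'\rangle$, apply stiffness under the assumption $x\neq y$ to obtain $\langle a,\,x,\,a'\rangle$, and conclude $x\in A$ by convexity, a contradiction. The paper's argument is identical up to the choice of variable names ($b,c,d$ in place of your $x,y,a'$).
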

\begin{proof}
It is to be proved that $b,\, c\in X\setminus A\,,$ $\left\langle A,\, b,\, c\right\rangle $
and $\left\langle A,\, c,\, b\right\rangle $ implies $b=c\,.$ The
assumptions $\left\langle A,\, b,\, c\right\rangle $ and $\left\langle A,\, c,\, b\right\rangle $
say that there are
\begin{align}
a,\, d & \in A\label{eq:stiff_interval_spaces_1}
\end{align}

\noindent such that
\begin{align}
 & \left\langle a,\, b,\, c\right\rangle \label{eq:stiff_interval_spaces_2}
\end{align}
and $\left\langle d,\, c,\, b\right\rangle \,,$ i.e.
\begin{align}
 & \left\langle b,\, c,\, d\right\rangle \,.\label{eq:stiff_interval_spaces_3}
\end{align}
Seeking a contradiction, suppose
\begin{align}
b & \neq c\,.\label{eq:stiff_interval_spaces_4}
\end{align}

\noindent (\ref{eq:stiff_interval_spaces_2}), (\ref{eq:stiff_interval_spaces_4}),
(\ref{eq:stiff_interval_spaces_3}) and the assumption (\ref{enu:interval_antisymmetry_criterion_2})
imply:
\begin{align}
 & \left\langle a,\, b,\, d\right\rangle \,.\label{eq:stiff_interval_spaces_5}
\end{align}

\noindent From (\ref{eq:stiff_interval_spaces_1}), (\ref{eq:stiff_interval_spaces_5})
and the assumption that $A$ is convex it follows that $b\in A\,,$
contradicting the assumption that $b\in X\setminus A\,.$\end{proof}
\begin{prop}
\label{sub:interval_transitive_interval_spaces} (interval-transitve
interval spaces) Let $X$ be an interval-transitive interval space
and $A$ a convex set. Then the relative entailment relation $\vdash_{A}$
is the reverse relation of the binary relation $\left\langle A,\,\cdot,\,\cdot\right\rangle \,.$\end{prop}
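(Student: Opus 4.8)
The plan is to unwind both sides of the claimed equality directly from the definitions of $\vdash_A$, of the convex closure, and of the set interval relation $\left\langle A,\,\cdot,\,\cdot\right\rangle$, using interval-transitivity exactly once, via the description of $\mathrm{co}$ supplied by the interval-transitivity criterion. Concretely, I must show that for $x,\,y\in X$,
\begin{align*}
x\vdash_{A}y\quad\Longleftrightarrow\quad\left\langle A,\, y,\, x\right\rangle \,,
\end{align*}
where the left side means $y\in\mathrm{cl}\left(A\cup\left\{ x\right\} \right)=\mathrm{co}\left(A\cup\left\{ x\right\} \right)$ (the closure system here being the system of convex sets), and the right side means there is an $a\in A$ with $\left\langle a,\, y,\, x\right\rangle$, i.e. $y\in\left[A,\,\left\{ x\right\} \right]$.

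First I would establish the inclusion $\left[A,\,\left\{ x\right\} \right]\subseteq\mathrm{co}\left(A\cup\left\{ x\right\} \right)$, which needs no transitivity: by monotonicity of the set interval operator (Proposition~\ref{sub:set_interval_operator}(\ref{enu:set_interval_operator_2})) and the fact that $A\cup\left\{ x\right\} \subseteq\mathrm{co}\left(A\cup\left\{ x\right\} \right)$ with the latter convex, we get $\left[A,\,\left\{ x\right\} \right]\subseteq\left[\mathrm{co}\left(A\cup\left\{ x\right\} \right),\,\mathrm{co}\left(A\cup\left\{ x\right\} \right)\right]\subseteq\mathrm{co}\left(A\cup\left\{ x\right\} \right)$. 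So one direction, $\left\langle A,\, y,\, x\right\rangle\Rightarrow x\vdash_{A}y$, is immediate (reading off $y\in[A,\{x\}]$).

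The substantive direction is $x\vdash_{A}y\Rightarrow\left\langle A,\, y,\, x\right\rangle$, i.e. $\mathrm{co}\left(A\cup\left\{ x\right\} \right)\subseteq\left[A,\,\left\{ x\right\} \right]$. For this it suffices to prove that $\left[A,\,\left\{ x\right\} \right]$ is itself a convex set containing $A\cup\left\{ x\right\}$, since $\mathrm{co}$ is the smallest such. That it contains $A$ follows from reflexivity of $\left\langle a,\,\cdot,\, a\right\rangle$ in an interval space (so each $a\in A$ lies in $[A,\{x\}]$ when... more carefully: each $a\in A$ satisfies $\left\langle a,\, a,\, x\right\rangle$? no — rather $\left\langle a,\, a,\, x\right\rangle$ holds by reflexivity of $\langle a,\,\cdot,\,\cdot\rangle$, so $a\in[A,\{x\}]$); and it contains $x$ since $\left\langle a,\, x,\, x\right\rangle$ holds by reflexivity of $\langle\cdot,\,\cdot,\, a\rangle$ for any $a\in A$ (assuming $A$ nonempty — the empty case is trivial since then $\mathrm{co}(\{x\})=\{x\}$ and $[A,\{x\}]=\emptyset$, handled separately, or one notes $A=\emptyset$ forces $x\vdash_A y\Leftrightarrow y\in\{x\}$... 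I should check the empty-$A$ case cleanly). Convexity of $\left[A,\,\left\{ x\right\} \right]$ is where interval-transitivity enters: by the interval-transitivity criterion (Theorem~\ref{sub:interval_transitivity_criterion}, equivalence of (\ref{enu:interval_transitivity_criterion_1}) and (\ref{enu:interval_transitivity_criterion_7}), or directly (\ref{enu:interval_transitivity_criterion_6})), since $A$ is convex and $\{x\}$ is convex, $\left[A,\,\{x\}\right]$ is convex.

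The main obstacle — such as it is — is bookkeeping around the empty set and the orientation of the relation: one must be careful that $\vdash_A$ is defined by $x\vdash_A y:\Leftrightarrow y\in\mathrm{cl}(A\cup\{x\})$, so it is the \emph{reverse} of $\left\langle A,\,\cdot,\,\cdot\right\rangle$ (which sends $x$ to the set $[A,\{x\}]$ of points $y$ with $\left\langle A,\, y,\, x\right\rangle$), exactly as the statement asserts; getting the $A$-entailment arrow pointing the right way is the only place an error could creep in. Once both inclusions $\left[A,\,\{x\}\right]\subseteq\mathrm{co}(A\cup\{x\})$ and $\mathrm{co}(A\cup\{x\})\subseteq\left[A,\,\{x\}\right]$ are in hand for every $x$, we conclude $\mathrm{cl}(A\cup\{x\})=\left[A,\,\{x\}\right]$, which unwinds precisely to $\vdash_A$ being the reverse of $\left\langle A,\,\cdot,\,\cdot\right\rangle$.
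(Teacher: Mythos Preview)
Your approach is essentially the same as the paper's: both arguments reduce to showing $\mathrm{co}(A\cup\{x\})=[A,\{x\}]$ and then unwind the definitions, and both obtain convexity of $[A,\{x\}]$ from Theorem~\ref{sub:interval_transitivity_criterion} (condition~(\ref{enu:interval_transitivity_criterion_7})) applied to the convex sets $A$ and $\{x\}$. You spell out a bit more than the paper does---the two inclusions separately, and the reflexivity facts giving $A\cup\{x\}\subseteq[A,\{x\}]$---whereas the paper jumps straight from ``$[A,\{c\}]$ is convex'' to ``$\mathrm{co}(A\cup\{c\})=[A,\{c\}]$''; and you flag the $A=\emptyset$ edge case, which the paper does not address.
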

\begin{proof}
\noindent It is to be proved that for $b,\, c\in X\,,$ $c\vdash_{A}b$
iff $\left\langle A,\, b,\, c\right\rangle \,.$
\begin{align}
 & \left\{ c\right\} \mbox{ is convex.}\label{eq:interval_transitive_interval_spaces_1}
\end{align}
From (\ref{eq:interval_transitive_interval_spaces_1}) and the assumptions
that $A$ is convex and $X$ is interval-transitive it follows by
\ref{sub:interval_transitivity_criterion} (interval-transitivity
criterion) that $\left[A,\,\left\{ c\right\} \right]$ is convex.
Therefore, $\mbox{co}\left(A\cup\left\{ c\right\} \right)=\left[A,\,\left\{ c\right\} \right]\,.$
Consequently, the following equivalences hold:
\begin{align*}
 & c\vdash_{A}b\\
\Longleftrightarrow & b\in\mbox{co}\left(A\cup\left\{ c\right\} \right)\\
\Longleftrightarrow & b\in\left[A,\,\left\{ c\right\} \right]\\
\Longleftrightarrow & \left\langle A,\, b,\, c\right\rangle \,.
\end{align*}

\end{proof}
\pagebreak{}

\noindent The following proposition is a particular case of a more
general principle for relational structures.
\begin{prop}
\label{sub:interval_spaces_are_combinatorial_spaces} (interval spaces
are combinatorial spaces) Let $X$ be an interval space. Then the
closure space consisting of $X$ and the set of convex sets is combinatorial.\end{prop}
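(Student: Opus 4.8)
The plan is to unwind the definition of a \emph{combinatorial} (algebraic) closure space: it suffices to show that the union of any inclusion-chain of convex subsets of $X$ is again convex. So let $D$ be a chain of convex sets and put $U:=\bigcup D\,;$ I must verify $\left[U,\, U\right]\subseteq U\,,$ i.e. that for all $x,\, y,\, z\in X\,,$ if $\left\langle x,\, y,\, z\right\rangle $ and $x,\, z\in U\,,$ then $y\in U\,.$

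So assume $\left\langle x,\, y,\, z\right\rangle $ and $x,\, z\in U\,.$ By definition of the union there are $A,\, B\in D$ with $x\in A$ and $z\in B\,.$ Since $D$ is a chain, $A\subseteq B$ or $B\subseteq A\,;$ in either case one of the two sets --- call it $M$ --- lies in $D$ and contains both $x$ and $z\,.$ As $M$ is convex, $\left[M,\, M\right]\subseteq M\,,$ so $y\in\left[M,\, M\right]\subseteq M\subseteq U\,.$ This proves $\left[U,\, U\right]\subseteq U\,,$ hence $U$ is convex.

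For the degenerate case $D=\emptyset$ one has $U=\emptyset\,,$ and $\emptyset$ is convex because $\left\langle \emptyset,\,\cdot,\,\emptyset\right\rangle $ is the empty relation, so $\left[\emptyset,\,\emptyset\right]=\emptyset\subseteq\emptyset\,.$ Thus $U$ is convex in all cases, which is exactly the combinatorial property for the closure space consisting of $X$ and the set of convex sets.

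There is no genuine obstacle here: the single point of the argument is that a chain is \emph{directed}, so the two witnesses $A$ and $B$ can be compared and the larger one absorbs both endpoints $x$ and $z$; convexity of that one larger member then does the rest. The ``more general principle'' the paper alludes to is presumably that, for any relational structure, the family of subsets closed under a fixed collection of finitary operations --- here the ternary betweenness relation, viewed as prescribing that each pair be completed by its in-between points --- is always a combinatorial closure system, since membership of a point is witnessed by a finite subset of the union and a directed family contains a single member covering that finite subset; the proof above is precisely that principle specialized to convexity.
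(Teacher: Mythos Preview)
Your proof is correct and essentially identical to the paper's: both pick two members of the chain witnessing the endpoints, use comparability to pass to the larger one, and apply its convexity. Your explicit treatment of $D=\emptyset$ and the closing remark on the general finitary-closure principle are extras not in the paper's argument.
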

\begin{proof}
For a chain $D$ of convex sets is to be proved that $\bigcup D$
is convex, i.e. for $a,\, b,\, c\in X\,,$ if $a,\, c\in\bigcup D$
and $\left\langle a,\, b,\, c\right\rangle \,,$ then $b\in\bigcup D\,.$
The assumption that $a,\, c\in\bigcup D$ says that there are $A,\, C\in D$
such that
\begin{align}
 & a\in A\,,\label{eq:interval_spaces_are_combinatorial_spaces_1}\\
 & c\in C\,.\label{eq:interval_spaces_are_combinatorial_spaces_2}
\end{align}
From the assumptions that $D$ is a chain and $A,\, C\in D$ it follows
that $A\subseteq C$ or $C\subseteq A\,.$ Suppose without loss of
generality that
\begin{align}
A & \subseteq C\,.\label{eq:interval_spaces_are_combinatorial_spaces_3}
\end{align}
(\ref{eq:interval_spaces_are_combinatorial_spaces_1}) and (\ref{eq:interval_spaces_are_combinatorial_spaces_3})
imply
\begin{align}
a & \in C\,.\label{eq:interval_spaces_are_combinatorial_spaces_4}
\end{align}
From (\ref{eq:interval_spaces_are_combinatorial_spaces_4}), (\ref{eq:interval_spaces_are_combinatorial_spaces_2})
and the assumptions that $\left\langle a,\, b,\, c\right\rangle $
and $C$ is convex it follows:
\begin{align}
b & \in C\,.\label{eq:interval_spaces_are_combinatorial_spaces_5}
\end{align}
(\ref{eq:interval_spaces_are_combinatorial_spaces_5}) and the assumption
that $C\in D$ imply that $b\in\bigcup D\,.$\end{proof}
\begin{thm}
\label{sub:interval_antisymmetry_criterion} (interval-antisymmetry
criterion) Let $X$ be an interval-transitive interval space. Then
the following conditions are equivalent:
\begin{enumerate}
\item \label{enu:interval_antisymmetry_criterion_1}$X$ is interval-antisymmetric.
\item \label{enu:interval_antisymmetry_criterion_2}$X$ is stiff.
\item \label{enu:interval_antisymmetry_criterion_3}For each convex set
$A\,,$ the binary relation $\left\langle A,\,\cdot,\,\cdot\right\rangle $
is antisymmetric on $X\setminus A\,.$
\item \label{enu:interval_antisymmetry_criterion_4}The pair consisting
of $X$ and the set of convex sets is an antiexchange space.
\item \label{enu:interval_antisymmetry_criterion_5}The pair consisting
of $X$ and the set of convex sets is an antimatroid.
\end{enumerate}
\end{thm}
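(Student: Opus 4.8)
The plan is to prove the cycle $(\ref{enu:interval_antisymmetry_criterion_1})\Rightarrow(\ref{enu:interval_antisymmetry_criterion_2})\Rightarrow(\ref{enu:interval_antisymmetry_criterion_3})\Rightarrow(\ref{enu:interval_antisymmetry_criterion_1})$ and, separately, the two-way equivalences $(\ref{enu:interval_antisymmetry_criterion_3})\Leftrightarrow(\ref{enu:interval_antisymmetry_criterion_4})\Leftrightarrow(\ref{enu:interval_antisymmetry_criterion_5})\,.$ Each individual step is a short deduction from one of the four propositions established above, together with the single extra ingredient that an interval-transitive interval space is interval-convex: this is part of the interval-transitivity criterion (Theorem \ref{sub:interval_transitivity_criterion}, condition (\ref{enu:interval_transitivity_criterion_6})), and it tells us that for all $a,b\in X$ the interval $[a,b]$ is a convex set, which is exactly what allows conditions phrased in terms of base-intervals and conditions phrased in terms of arbitrary convex sets to be matched up.

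For $(\ref{enu:interval_antisymmetry_criterion_1})\Rightarrow(\ref{enu:interval_antisymmetry_criterion_2})$ I recall that an interval-transitive interval space is point-transitive, so Proposition \ref{sub:interval_spaces_antisymmetric_from_a_base_interval} applies with any $a,d\in X$: its hypothesis is precisely that $\left\langle [a,d],\,\cdot,\,\cdot\right\rangle $ is antisymmetric on $X\setminus[a,d]$, which is interval-antisymmetry, and its conclusion, quantified over all $a,b,c,d\in X$, is exactly stiffness. The implication $(\ref{enu:interval_antisymmetry_criterion_2})\Rightarrow(\ref{enu:interval_antisymmetry_criterion_3})$ is Proposition \ref{sub:stiff_interval_spaces} verbatim. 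For $(\ref{enu:interval_antisymmetry_criterion_3})\Rightarrow(\ref{enu:interval_antisymmetry_criterion_1})$ I invoke interval-convexity: given $a,b\in X$, the set $A:=[a,b]$ is convex, so by $(\ref{enu:interval_antisymmetry_criterion_3})$ the relation $\left\langle [a,b],\,\cdot,\,\cdot\right\rangle $ is antisymmetric on $X\setminus[a,b]$, which is what interval-antisymmetry asserts.

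It remains to splice in conditions $(\ref{enu:interval_antisymmetry_criterion_4})$ and $(\ref{enu:interval_antisymmetry_criterion_5})$ at the level of $(\ref{enu:interval_antisymmetry_criterion_3})$. Unwinding the definition, the pair consisting of $X$ and the set of convex sets is an antiexchange space iff for every convex (i.e. closed) set $A$ the relation $\vdash_{A}$ is antisymmetric on $X\setminus A$; by Proposition \ref{sub:interval_transitive_interval_spaces} the relation $\vdash_{A}$ is the reverse of $\left\langle A,\,\cdot,\,\cdot\right\rangle $, and a binary relation is antisymmetric on a subset of $X$ if and only if its reverse is, so this condition is literally $(\ref{enu:interval_antisymmetry_criterion_3})$; hence $(\ref{enu:interval_antisymmetry_criterion_3})\Leftrightarrow(\ref{enu:interval_antisymmetry_criterion_4})\,.$ Finally, an antimatroid is a combinatorial closure space that is an antiexchange space and has $\emptyset$ closed; Proposition \ref{sub:interval_spaces_are_combinatorial_spaces} supplies combinatoriality for free and $\emptyset$ is convex since the convexity condition is vacuous for it, so $(\ref{enu:interval_antisymmetry_criterion_4})\Leftrightarrow(\ref{enu:interval_antisymmetry_criterion_5})\,,$ the implication $(\ref{enu:interval_antisymmetry_criterion_5})\Rightarrow(\ref{enu:interval_antisymmetry_criterion_4})$ being immediate from the definition. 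I do not anticipate a genuine obstacle; the only point that truly needs the standing hypothesis of interval-transitivity — rather than merely point-transitivity — is the passage $(\ref{enu:interval_antisymmetry_criterion_3})\Rightarrow(\ref{enu:interval_antisymmetry_criterion_1})$, where without interval-convexity the interval $[a,b]$ need not be convex and the convex-set formulation would fail to specialize back to the base-interval formulation.
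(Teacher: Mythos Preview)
Your proof is correct and follows essentially the same route as the paper's: the cycle $(\ref{enu:interval_antisymmetry_criterion_1})\Rightarrow(\ref{enu:interval_antisymmetry_criterion_2})\Rightarrow(\ref{enu:interval_antisymmetry_criterion_3})\Rightarrow(\ref{enu:interval_antisymmetry_criterion_1})$ via Propositions \ref{sub:interval_spaces_antisymmetric_from_a_base_interval} and \ref{sub:stiff_interval_spaces} plus interval-convexity, and then $(\ref{enu:interval_antisymmetry_criterion_3})\Leftrightarrow(\ref{enu:interval_antisymmetry_criterion_4})$ via Proposition \ref{sub:interval_transitive_interval_spaces} and $(\ref{enu:interval_antisymmetry_criterion_4})\Leftrightarrow(\ref{enu:interval_antisymmetry_criterion_5})$ via Proposition \ref{sub:interval_spaces_are_combinatorial_spaces}. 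One small correction to your closing remark: the equivalence $(\ref{enu:interval_antisymmetry_criterion_3})\Leftrightarrow(\ref{enu:interval_antisymmetry_criterion_4})$ also genuinely requires interval-transitivity rather than mere point-transitivity, since Proposition \ref{sub:interval_transitive_interval_spaces} uses Theorem \ref{sub:interval_transitivity_criterion} to conclude that $[A,\{c\}]$ is convex and hence equals $\mathrm{co}(A\cup\{c\})$.
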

\begin{proof}
Step 1. (\ref{enu:interval_antisymmetry_criterion_1}) $\Rightarrow$
(\ref{enu:interval_antisymmetry_criterion_2}). From the assumption
that $X$ is interval-transitive it follows:
\begin{align}
 & X\mbox{ is point-transitive.}\label{eq:interval_antisymmetry_criterion_1}
\end{align}
From (\ref{eq:interval_antisymmetry_criterion_1}) and the assumption
(\ref{enu:interval_antisymmetry_criterion_1}) it follows \ref{sub:interval_spaces_antisymmetric_from_a_base_interval}
(interval spaces antisymmetric from a base-interval) that $X$ is
stiff.

Step 2. (\ref{enu:interval_antisymmetry_criterion_2}) $\Rightarrow$
(\ref{enu:interval_antisymmetry_criterion_3}) follows from \ref{sub:stiff_interval_spaces}
(stiff interval spaces).

Step 3. (\ref{enu:interval_antisymmetry_criterion_3}) $\Rightarrow$
(\ref{enu:interval_antisymmetry_criterion_1}). For $a,\, d\in X$
it is to be proved that the binary relation $\left\langle \left[a,\, d\right],\,\cdot,\,\cdot\right\rangle $
is antisymmetric on $X\setminus\left[a,\, d\right]\,.$ From the assumption
that $X$ is interval-transitive it follows by \ref{sub:interval_transitivity_criterion}
(interval-transitivity criterion) that $X$ is interval-convex. In
particular,
\begin{align}
 & \left[a,\, d\right]\mbox{ is convex.}\label{eq:interval_antisymmetry_criterion_2}
\end{align}

\noindent (\ref{eq:interval_antisymmetry_criterion_2}) and the assumption
(\ref{enu:interval_antisymmetry_criterion_3}) imply that the binary
relation $\left\langle \left[a,\, d\right],\,\cdot,\,\cdot\right\rangle $
is antisymmetric on $X\setminus\left[a,\, d\right]\,.$

Step 4. (\ref{enu:interval_antisymmetry_criterion_3}) $\Leftrightarrow$
(\ref{enu:interval_antisymmetry_criterion_4}). It is to be proved
iff that for each convex set $A\,,$ the binary relation $\left\langle A,\,\cdot,\,\cdot\right\rangle $
is antisymmetric on $X\setminus A\,.$ for each convex set $A\,,$
the relative entailment relation $\vdash_{A}$ is antisymmetric on
$X\setminus A\,.$ Antismmetry being preserved under passing to the
reverse relation, it suffices to prove that for each convex set $A\,,$
the relation $\vdash_{A}$ is the reverse relation of the relation
$\left\langle A,\,\cdot,\,\cdot\right\rangle \,.$ This claim follows
by \ref{sub:interval_transitive_interval_spaces} (interval-transitve
interval spaces) from the assumption that $X$ is interval-transitive.

Step 5. (\ref{enu:interval_antisymmetry_criterion_4}) $\Leftrightarrow$
(\ref{enu:interval_antisymmetry_criterion_5}) follows by \ref{sub:interval_spaces_are_combinatorial_spaces}
(interval spaces are combinatorial spaces) because $\emptyset$ is
convex.
\end{proof}

\section{Conclusion}

Two well-known geometry axioms have been generated from the poset
axioms of transitivity and antisymmetry, passing from a base-point
to a base-interval in the following sense: By \ref{sub:interval_transitivity_criterion}
(interval-transitivity criterion) , interval-transitivity is equivalent
to the axiom \cite[§10, Assioma XIII]{peano_1889}. By \ref{sub:interval_antisymmetry_criterion}
(interval-antisymmetry criterion), assuming interval-transitivity,
interval-antisymmetry is equivalent to the axiom \cite[§1, VIII. Grundsatz]{pasch_1882}.
Beyond the conditions defining an interval space, these are the first
two axioms in the incremental buildup of order geoemty as developed
in \cite{coppel_1998}. Sticking to a base-point, but passing from
the interval relation to the incidence relation, which is the symmetrized
interval relation, and passing from poset axioms to the axioms for
an equivalence relation, this theme of generating geometry axioms
is continued in \cite[Theorem 3.3.2]{retter_2013}: Assuming the two
axioms above and one further axiom, incidence-transitivity is equivalent
to the conjunction of the next two axioms in the incremental builup
in \cite{coppel_1998}, \cite[(5.2)]{sholander_1952} and \cite[§1, VII. Grundsatz]{pasch_1882}.
These results together reinforce the choice of geometric axioms as
well as the order of the above-mentioned incremental buildup. Research
is under way on the question how many of the other axioms of order
geometry fit into this scheme.

\end{document}